\newtheorem{teor}{Theorem}
\newtheorem{cor}{Corollary}
\newtheorem{prop}{Proposition}
\newtheorem{con}{Conjecture}
\newtheorem{lem}{Lemma}
\newtheorem{defi}{Definition}
\theoremstyle{definition}
\newtheorem*{rem}{Remark}
\renewcommand{\subjclassname}{AMS \textup{2010} Mathematics Subject
Classification\ }
\author{Jos\'{e} Mar\'{i}a Grau}
\address{Departamento de Matemáticas, Universidad de Oviedo\\ Avda. Calvo Sotelo s/n, 33007 Oviedo, Spain}
\email{grau@uniovi.es}
\author{Antonio M. Oller-Marc\'{e}n}
\address{Centro Universitario de la Defensa\\ Ctra. Huesca s/n, 50090 Zaragoza, Spain} \email{oller@unizar.es}
\title{Generalizing Giuga's conjecture}
\title{On the congruence $\sum_{j=1}^{n-1}  j^{k(n-1)} \equiv -1 \pmod n $. k-strong Giuga and k-Carmichael numbers}
\begin{document}

\begin{abstract}
In this work we consider the congruence $\sum_{j=1}^{n-1}  j^{k(n-1)} \equiv -1 \pmod n$ for each $k \in \mathbb{N}$, thus extending Giuga's ideas for $k=1$. In particular, it is proved that a pair $(n,k)\in \mathbb{N}^2$  satisfies this congruence if and only if $n$ is prime or a Giuga Number and $\lambda(n) \mid k(n-1)$. In passing, we establish new characterizations of Giuga numbers and we study some properties of the numbers $n$ satisfying $\lambda(n) \mid k(n-1)$.
\end{abstract}

\maketitle
\subjclassname{11B99, 11A99, 11A07}

\keywords{Keywords: Giuga numbers, Carmichael function}

\section{Introduction}
In 1950, G. Giuga conjectured \cite{GIU} that if an integer $n$ satisfies ${\sum_{j=1}^{n-1}  j^{n-1} \equiv -1}$ (mod $n$), then $n$ must be a prime. Moreover, Giuga proved that $n$ is a counterexample to his conjecture if and only if for each prime divisor $p$ of $n$, $p(p-1)\mid (n/p-1)$. In what follows, counterexamples to Giuga's conjecture will be called \emph{strong Giuga numbers}.

Using the above characterization, Giuga proved computationally that any strong Giuga number has at least 1000 digits. Equipped with more computing power, Bedocchi \cite{BEDO} later raised this bound to 1700 digits. Improving their method, D. Borwein, J. M. Borwein, P. B. Borwein and R. Girgensohn \cite{BOR} determined that any strong Giuga number contains at least 3459 distinct primes and so has at least 13887 digits.

On the other hand, Luca, Pomerance and Shparlinski \cite{LUCA} have established the following bound (which improves previous work by Tipu \cite{TIPU}) for the counting function of the strong Giuga numbers:
$$\left| \{n<X :  n \textrm{ is a strong Giuga numbers }\}\right|  \ll \frac{X^{\frac{1}{2}}}{(\log(X))^2}.$$

Kellner  has stablished \cite{KELL} that Giuga's conjecture is equivalent to the following conjecture by Agoh \cite{AGO}.

\begin{con}[Agoh, 1995]
Let $B_k$ denote the k-th Bernoulli number. Then, $n$ is a prime if and only if $nB_{n-1} \equiv -1  \pmod n.$
\end{con}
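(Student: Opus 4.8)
The plan is to recognize at the outset that the stated biconditional is, after Kellner's work, logically equivalent to Giuga's conjecture itself, so that its hard half is genuinely open; what can honestly be carried out is (i) the reduction of Agoh's congruence to Giuga's, and (ii) the elementary ``prime $\Rightarrow$ congruence'' implication. I would therefore first establish the congruence identity
$$\sum_{j=1}^{n-1} j^{n-1} \equiv n B_{n-1} \pmod{n},$$
valid for every integer $n \geq 2$. This instantly matches the quantity governed by Giuga's conjecture with the quantity appearing in Agoh's statement, and thereby shows that $n$ satisfies one congruence precisely when it satisfies the other, reducing each conjecture to the other.

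To prove this identity I would invoke the Bernoulli (Faulhaber) summation formula
$$\sum_{j=1}^{n-1} j^{n-1} = \frac{1}{n}\sum_{i=0}^{n-1}\binom{n}{i} B_i\, n^{n-i} = \sum_{i=0}^{n-1}\binom{n}{i} B_i\, n^{n-i-1},$$
and isolate the top term $i = n-1$, which is exactly $n B_{n-1}$. Each remaining term with $i < n-1$ carries a factor $n^{n-i-1}$ with exponent $n-i-1 \geq 1$; the main technical task is to show that, after accounting for the denominators of the $B_i$ (which by the von Staudt--Clausen theorem are squarefree and explicit), every such term is $\equiv 0 \pmod n$ in the appropriate $n$-integral sense. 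This denominator bookkeeping is the delicate point, since the Bernoulli numbers are not integers and the congruence must be read prime-by-prime among the divisors of $n$, so that a cancellation of $n$ against a Bernoulli denominator does not spoil the vanishing.

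For the elementary direction, suppose $n = p$ is an odd prime. By von Staudt--Clausen, $B_{p-1} + \sum_{(q-1)\mid (p-1)} \tfrac{1}{q}$ is an integer, the sum running over primes $q$; among these the value $q = p$ occurs (since $p-1 \mid p-1$) and contributes $\tfrac{1}{p}$, while the other summands are $p$-integral. Hence $p B_{p-1} \equiv -1 \pmod p$, which is precisely Agoh's congruence, and the case $p=2$ is immediate. The converse — that no composite $n$ can satisfy $nB_{n-1}\equiv -1\pmod n$ — is exactly the open content. Through the identity above together with Giuga's characterization, I would reduce it to the assertion that there is no integer $n$ for which every prime divisor $p$ of $n$ satisfies $p(p-1)\mid (n/p - 1)$; and this is the obstacle I do not expect to overcome, being nothing other than Giuga's conjecture in disguise.
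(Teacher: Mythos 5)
This statement is labelled a \emph{Conjecture} in the paper, and the paper offers no proof of it: it merely records Kellner's result that Agoh's conjecture is equivalent to Giuga's conjecture, both of which remain open. Your assessment is therefore exactly right, and the two things you actually prove are the correct and standard partial results. The ``prime $\Rightarrow$ congruence'' direction via von Staudt--Clausen is sound: since $(p-1)\mid(p-1)$, the prime $p$ itself contributes $1/p$ to the denominator of $B_{p-1}$, the remaining contributions are $p$-integral, and multiplying by $p$ yields $pB_{p-1}\equiv -1 \pmod p$. The reduction of the composite case to Giuga's congruence via the Faulhaber expansion and the squarefreeness of Bernoulli denominators is precisely the content of the Kellner reference \cite{KELL}; you are right to flag the denominator bookkeeping for the terms with $i<n-1$ as the delicate step rather than waving it through, since a factor of $n$ can be partially consumed by a Bernoulli denominator and the congruence must be checked prime by prime. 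The only caveat is one of framing: no proof of the full biconditional can be given, so what you have written should be presented as (i) a proof of the easy implication and (ii) a proof of equivalence with Giuga's conjecture, not as a proof of the statement itself --- which is exactly how the paper treats it.
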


Borwein et al. \cite{BOR} introduced the following definition of \emph{Giuga numbers} relajando la propiedad de divisibilidad of the strong Giuga numbers.

\begin{defi}
A Giuga Number is a composite number $n$ such that $p\mid (n/p-1)$ for every prime divisor $p$ of $n$.
\end{defi}
Con esta definición pueden caracterizarse los strong Giuga number de la siguiente manera:
\begin{prop}
Let $n$ be a composite integer. Then $n$ is a strong Giuga number if and only if $n$ is both a Giuga number and a Carmichael number.
\end{prop}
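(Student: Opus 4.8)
The plan is to exploit that all three notions are defined by a divisibility condition attached to each individual prime divisor $p$ of $n$, and to show that the strong Giuga condition at $p$ splits precisely into the Giuga condition at $p$ together with the Carmichael (Korselt) condition at $p$. Recall the three local conditions at a prime $p \mid n$: being a strong Giuga number requires $p(p-1)\mid(n/p-1)$; being a Giuga number requires $p\mid(n/p-1)$; and, by Korselt's criterion, being a Carmichael number requires $(p-1)\mid(n-1)$, together with $n$ being composite and squarefree.

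First I would record two elementary observations. Since $\gcd(p,p-1)=1$, the divisibility $p(p-1)\mid(n/p-1)$ holds if and only if both $p\mid(n/p-1)$ and $(p-1)\mid(n/p-1)$ hold. Next, because $p\equiv 1\pmod{p-1}$, we have $n=p\cdot(n/p)\equiv n/p\pmod{p-1}$, so that $(p-1)\mid(n/p-1)$ if and only if $(p-1)\mid(n-1)$. Combining these two facts, the strong Giuga condition at $p$ is equivalent to the conjunction of the Giuga condition at $p$ and the Korselt condition at $p$.

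I would then dispose of squarefreeness, which is required on both sides. If $p^2\mid n$ for some prime $p\mid n$, then $p\mid n/p$, so $n/p-1\equiv-1\pmod p$ and $p\nmid(n/p-1)$; hence any integer satisfying the Giuga (or strong Giuga) condition is automatically squarefree, matching the squarefreeness built into Korselt's criterion. With squarefreeness secured, the proof assembles in both directions. For the forward implication, if $n$ is a strong Giuga number then $p(p-1)\mid(n/p-1)$ for every $p\mid n$; the split yields $p\mid(n/p-1)$ for all $p$ (so $n$ is a Giuga number) and $(p-1)\mid(n-1)$ for all $p$ (so, being composite and squarefree, $n$ is a Carmichael number). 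Conversely, if $n$ is simultaneously Giuga and Carmichael, then for each $p\mid n$ we have $p\mid(n/p-1)$ and $(p-1)\mid(n-1)$, the latter being equivalent to $(p-1)\mid(n/p-1)$; coprimality of $p$ and $p-1$ then gives $p(p-1)\mid(n/p-1)$, i.e.\ $n$ is a strong Giuga number.

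The argument is essentially a bookkeeping of local divisibility conditions, so I do not expect any single step to present a serious obstacle. The one point that must be handled with care — and which is really the crux of the equivalence — is the reduction $(p-1)\mid(n/p-1)\iff(p-1)\mid(n-1)$ coming from $n\equiv n/p\pmod{p-1}$, since this is exactly what converts the term $n/p-1$ appearing in the Giuga and strong Giuga definitions into the term $n-1$ appearing in Korselt's criterion. I would also be explicit in invoking Korselt's criterion as the working characterization of Carmichael numbers, as that is the form in which the condition $(p-1)\mid(n-1)$ naturally enters the computation.
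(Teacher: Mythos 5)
Your proof is correct and complete. The paper actually states this proposition without proof (treating it as a known consequence of Giuga's divisibility characterization and Korselt's criterion), and your argument --- splitting $p(p-1)\mid(n/p-1)$ via $\gcd(p,p-1)=1$, converting $(p-1)\mid(n/p-1)$ into $(p-1)\mid(n-1)$ using $n\equiv n/p\pmod{p-1}$, and noting that the Giuga condition already forces squarefreeness --- is exactly the standard justification one would supply.
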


There are several equivalent definitions of Giuga numbers. Some of them are the following.

\begin{prop}
Let $n$ be a composite integer. Then, the following are equivalent:
\begin{itemize}
\item [i)] $n$ is a Giuga number.
\item [ii)] Giuga \cite{GIU}: $\displaystyle{\sum_{p\mid n} \frac{1}{p}-\frac{1}{n} \in \mathbb{N}}$.
\item[iii)] Borwein et al. \cite{BOR}: $\displaystyle{\sum_{j=1}^{n-1}  j^{\phi(n)} \equiv -1} \pmod n$, where $\phi$ is Euler's totient function.
\item [iv)] Agoh \cite{AGO}: $nB_{\phi(n)} \equiv -1  \pmod n$, where $B$ is a Bernoulli number.
\item[v)] Grau and Oller \cite{GOL}: $n'=an+1$ for some $a\in \mathbb{N}$, where $n'$ is the arithmetic derivative of $n$.
\end{itemize}
\end{prop}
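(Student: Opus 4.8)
The plan is to prove the proposition by showing that each of (ii)--(v) is equivalent to (i), exploiting the common observation that any $n$ satisfying one of these conditions must be squarefree. Indeed, if $p^2\mid n$ then $p\mid n/p$, so $p\nmid (n/p-1)$ and (i) fails; I would check analogously that a non-squarefree $n$ violates each of the other four conditions as well. This lets me assume throughout that $n=p_1\cdots p_r$ with distinct primes and $r\geq 2$, and reduces every condition to the single statement ``$p_j\mid (n/p_j-1)$ for all $j$''.

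For (i) $\Leftrightarrow$ (ii) I would set $t=\sum_{p\mid n}1/p-1/n$ and clear denominators to get $nt=\sum_i n/p_i-1$, so (ii) asserts $n\mid\bigl(\sum_i n/p_i-1\bigr)$. Since $n$ is squarefree, by CRT this holds iff $\sum_i n/p_i\equiv 1\pmod{p_j}$ for each $j$; as $p_j\mid n/p_i$ for $i\neq j$, the congruence collapses to $n/p_j\equiv 1\pmod{p_j}$, which is exactly (i). The equivalence (i) $\Leftrightarrow$ (v) runs in parallel: from the formula $n'=n\sum_i(a_i/p_i)$ for the arithmetic derivative of $n=\prod p_i^{a_i}$, the condition $n'=an+1$ is equivalent to $\sum_i a_i/p_i-1/n\in\mathbb{N}$, which for squarefree $n$ is precisely (ii). Thus (v) $\Leftrightarrow$ (ii) $\Leftrightarrow$ (i) with only bookkeeping, squarefreeness being forced by reducing $\sum_i a_i n/p_i-1$ modulo a repeated prime.

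For (i) $\Leftrightarrow$ (iii) the key step is a power-sum computation modulo each prime divisor. Grouping $j\in\{0,\dots,n-1\}$ by residue class mod $p_i$ (each class occurring $n/p_i$ times) gives $\sum_{j=1}^{n-1}j^{\phi(n)}\equiv (n/p_i)\sum_{t=0}^{p_i-1}t^{\phi(n)}\pmod{p_i}$. Because $(p_i-1)\mid\phi(n)$, Fermat's little theorem yields $\sum_{t=0}^{p_i-1}t^{\phi(n)}\equiv p_i-1\equiv -1\pmod{p_i}$, so the whole sum is $\equiv -\,n/p_i\pmod{p_i}$. By CRT, (iii) holds iff $n/p_i\equiv 1\pmod{p_i}$ for every $i$, recovering (i); the same computation with a repeated prime forces squarefreeness, since then $p_i\mid n/p_i$ makes the sum $\equiv 0\not\equiv -1\pmod{p_i}$.

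The delicate equivalence is (i) $\Leftrightarrow$ (iv), and this is where I expect the main obstacle: the quantity $nB_{\phi(n)}$ is not an integer, so the congruence $nB_{\phi(n)}\equiv -1\pmod n$ must be read $p$-adically, one prime $p\mid n$ at a time. The tool is the von Staudt--Clausen theorem: writing $B_{\phi(n)}=A-\sum_{(q-1)\mid\phi(n)}1/q$ with $A\in\mathbb{Z}$, for each $p\mid n$ one has $(p-1)\mid\phi(n)$, so $p$ contributes exactly one factor to the denominator; squarefreeness then guarantees that $nB_{\phi(n)}$ is $p$-integral, and reducing in $\mathbb{Z}_{(p)}$ gives $nB_{\phi(n)}\equiv -\,n/p\pmod p$. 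This is the same residue obtained for the power sum in (iii), so (iv) likewise reduces to $n/p\equiv 1\pmod p$ for all $p\mid n$, i.e. to (i). The care needed in justifying $p$-integrality and fixing the correct interpretation of the congruence, rather than any single hard estimate, is the crux of the argument.
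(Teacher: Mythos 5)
The paper never proves this proposition: it appears in the introduction as a compendium of known characterizations, each item attributed to its original source (Giuga, Borwein et al., Agoh, Grau--Oller), so there is no in-paper proof to compare against. Judged on its own, your outline is correct and follows the natural unified route: each of the five conditions forces $n$ to be squarefree, and for squarefree $n$ each condition localizes, via the Chinese Remainder Theorem, to the single congruence $n/p\equiv 1\pmod{p}$ for every prime $p\mid n$. Your residue-class-grouping computation for (iii) is essentially the same device the paper itself deploys in its Lemma 1 (there refined to work modulo $p_i^{r_i}$ rather than modulo $p_i$), so your method is consistent with the paper's toolkit. Two points deserve more care in the write-up. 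First, in (iv) you say that ``squarefreeness guarantees that $nB_{\phi(n)}$ is $p$-integral''; in fact $p$-integrality holds for any $n$ with $p\mid n$, because von Staudt--Clausen puts $p$ in the denominator of $B_{\phi(n)}$ at most once. What squarefreeness actually buys is that $nB_{\phi(n)}\equiv -n/p\not\equiv 0\pmod{p}$, whereas a square factor $p^2\mid n$ forces $nB_{\phi(n)}\equiv 0\not\equiv -1\pmod{p}$; this is precisely the computation that rules out non-squarefree $n$ in (iv), so it should be written out rather than folded into the integrality remark. Second, you are right that the congruence in (iv) must be read in $\mathbb{Z}_{(p)}$ for each $p\mid n$ (equivalently, after clearing the part of the denominator prime to $n$), and flagging that as the only delicate interpretive step is appropriate. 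With those details filled in, the argument is complete.
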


Up to date only thirteen Giuga numbers are known (see A007850 in the \emph{On-Line Encyclopedia of Integer Sequences}):
\begin{align*}
&\mathfrak{g}_1:=\textbf{30},\\
&\mathfrak{g}_2:=\textbf{858},\\
&\mathfrak{g}_3:=\textbf{1722},\\
&\mathfrak{g}_4:=\textbf{66198},\\
&\mathfrak{g}_5:=\textbf{2214408306},\\
&\mathfrak{g}_6:=\textbf{24423128562},\\
&\mathfrak{g}_7:=\textbf{432749205173838},\\
&\mathfrak{g}_8:=\textbf{14737133470010574},\\
&\mathfrak{g}_9:=\textbf{550843391309130318},\\
&\mathfrak{g}_{10}:=\textbf{244197000982499715087866346},\\
&\mathfrak{g}_{11}:=\textbf{554079914617070801288578559178},\\
&\mathfrak{g}_{12}:=\textbf{1910667181420507984555759916338506},\\
&\mathfrak{g}_{13}:=\textbf{420001794970774706203871150967065663240419575375163060922}\\ &\textbf{8764416142557211582098432545190323474818}.
\end{align*}

All known Giuga numbers are even and it is known that if an odd Giuga number exists, it must be the product of at least 14 primes. It is not even known if there are infinitely many Giuga numbers.

Variants of Giuga numbers have already been proposed by the authors \cite{GLO}. In this work some new characterizations of Giuga numbers are stablished. This characterizations arise from the study of the congruence $\sum_{j=1}^{n-1}  j^{k(n-1)} \equiv -1$ (mod $n$) with $k \in \mathbb{N}$. It is proved that a pair $(n,k)\in \mathbb{N}^2$, with composite $n$, satisfies this congruence if and only if $n$ is a Giuga number and $ \lambda(n)$ divides $k(n-1)$. This last property leads to a generalization of Carmichael numbers (the  $k$-\emph{Carmichael numbers}) which are also characterized in the square-free case.

\section{New characterizations of Giuga numbers}

The following result establishes that in Proposition 2 iii) we can replace Euler's totient function $\phi(n)$ by Carmichael's function $\lambda(n)$ or by any multiple of $\phi(n)$ or $\lambda(n)$.

\begin{lem}
For every natural numbers $A$, $B$ and $N$ we have that:
$$\sum_{j=1}^{N-1}  j^{A \lambda(N)} \equiv  \sum_{j=1}^{N-1}  j^{B \phi(N)}\ \textrm{(mod $N$)}.$$
\end{lem}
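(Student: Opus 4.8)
The plan is to reduce the congruence to a statement about power sums modulo prime powers via the Chinese Remainder Theorem, and then to evaluate those power sums. First I would note that, since $\lambda(N)\mid \phi(N)$, both exponents $A\lambda(N)$ and $B\phi(N)$ are positive multiples of $\lambda(N)$; hence it suffices to show that $\sum_{j=1}^{N-1} j^{m}\bmod N$ is the same for every positive multiple $m$ of $\lambda(N)$. Writing $N=\prod_{i} p_i^{a_i}$ and extending each sum to a complete residue system (the extra term $j=0$ contributes $0^m=0$), I would reduce $\sum_{j=0}^{N-1} j^m$ modulo each $p_i^{a_i}$: as $j$ runs over $0,\dots,N-1$, its residue mod $p_i^{a_i}$ takes every value exactly $N/p_i^{a_i}$ times, so
$$\sum_{j=0}^{N-1} j^m \equiv \frac{N}{p_i^{a_i}}\sum_{y=0}^{p_i^{a_i}-1} y^m \pmod{p_i^{a_i}}.$$
Since $N/p_i^{a_i}$ is a unit modulo $p_i^{a_i}$ and is independent of $m$, by CRT the whole problem comes down to the following prime-power claim: for a prime power $p^a$ and every positive multiple $m$ of $\lambda(p^a)$, the residue of $\sum_{y=0}^{p^a-1} y^m$ modulo $p^a$ does not depend on $m$.

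To prove this I would split the inner sum according to whether $y$ is a unit modulo $p^a$. For the $\phi(p^a)$ units $y$, the hypothesis $\lambda(p^a)\mid m$ gives $y^m\equiv 1\pmod{p^a}$, so their total contribution is $\equiv\phi(p^a)$, independent of $m$. The non-units are $y=pz$ with $0\le z<p^{a-1}$, and they contribute $p^m\sum_{z=0}^{p^{a-1}-1} z^m$; the claim thus reduces to showing that this is $\equiv 0\pmod{p^a}$, i.e. that $m+v_p\!\left(\sum_{z=0}^{p^{a-1}-1} z^m\right)\ge a$, where $v_p$ denotes the $p$-adic valuation.

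The main obstacle is exactly this valuation estimate. When $m\ge a$ it is immediate, since $p^m\equiv 0\pmod{p^a}$; and in fact $\lambda(p^a)\ge a$ holds in every case except $p=2,\,a=3$ (where $\lambda(8)=2$), so for odd $p$ and for $p=2$ with $a\ne 3$ one has $m\ge\lambda(p^a)\ge a$ and the non-unit part vanishes trivially. For the remaining small cases I would bound the valuation of the inner sum itself: splitting $\sum_{z=0}^{p^{a-1}-1} z^m$ again into units and non-units and using $z^m\equiv 1\pmod{p^{a-1}}$ on the $\phi(p^{a-1})$ units shows this sum is $\equiv\phi(p^{a-1})\pmod{p^{a-1}}$, so that its valuation equals exactly $a-2$; combined with $m\ge\lambda(p^a)\ge a-1$ this yields $m+v_p(\cdots)\ge(a-1)+(a-2)\ge a$ as soon as $a\ge 3$. (Equivalently, one may simply verify the single exceptional case $p=2,\,a=3,\,m=2$ by hand, where $2^2\sum_{z=0}^{3}z^2=4\cdot 14\equiv 0\pmod 8$.) Assembling the pieces, each $\sum_{y=0}^{p_i^{a_i}-1} y^m\equiv\phi(p_i^{a_i})$ independently of $m$, so the two original sums agree modulo every $p_i^{a_i}$, and therefore modulo $N$, which is the desired congruence.
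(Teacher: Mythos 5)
Your proof is correct and follows essentially the same route as the paper's: reduce modulo each prime power dividing $N$ via a complete residue system, split into units (which contribute $\phi(p^a)$ since the exponent is a multiple of $\lambda(p^a)$) and non-units (killed because the exponent is at least the $p$-adic valuation needed), with the only genuine exception $p=2$, $a=3$ checked directly. The differences are cosmetic: you first unify both exponents as positive multiples of $\lambda(N)$ and pin down the single exceptional case more sharply, whereas the paper verifies the $2$-part wholesale for $a\le 3$.
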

\begin{proof}
Put $N=2^ap_1^{r_1}\cdots p_s^{r_s}$ with $p_i$ distinct odd primes. Choose $i\in \{1,\dots, s\}$. We have that:
$$\sum_{j=1}^{N-1} j^{A\lambda(N)} \equiv \frac{N}{p_i^{r_i}}\sum_{j=1}^{p_i^{r_i}-1} j^{A\lambda(N)}\ \textrm{(mod $p_i^{r_i}$)}.$$
$$\sum_{j=1}^{N-1} j^{B\phi(N)} \equiv \frac{N}{p_i^{r_i}}\sum_{j=1}^{p_i^{r_i}-1} j^{B\phi(N)}\ \textrm{(mod $p_i^{r_i}$)}.$$
Now, since both $A\lambda(N),\ B\phi(N)\geq r_i$, we get:
$$\sum_{j=1}^{p_i^{r_i}-1} j^{A\lambda(N)}=\sum_{\substack{1\leq j\leq p_i^{r_i}-1\\ (p_i,j)=1}} j^{A\lambda(N)}+\sum_{\substack{1\leq j\leq p_i^{r_i}-1\\ p_i\mid j}} j^{A\lambda(N)}\equiv \phi(p_i^{r_i})+0\ \textrm{(mod ($p_i^{r_i}$)}.$$
$$\sum_{j=1}^{p_i^{r_i}-1} j^{B\phi(N)}=\sum_{\substack{1\leq j\leq p_i^{r_i}-1\\ (p_i,j)=1}} j^{B\phi(N)}+\sum_{\substack{1\leq j\leq p_i^{r_i}-1\\ p_i\mid j}} j^{A\lambda(N)}\equiv \phi(p_i^{r_i})+0\ \textrm{(mod ($p_i^{r_i}$)}.$$
Consequently:
$$\sum_{j=1}^{N-1} j^{A\lambda(N)}\equiv\sum_{j=1}^{N-1} j^{B\phi(N)}\ \textrm{(mod $p_i^{r_i}$)}\ \textrm{for every $i=1,\dots,s$}.$$
Clearly if $N$ is odd the proof is complete. If $n$ is even we have that:
$$\sum_{j=1}^{N-1}j^{A\lambda(N)}\equiv\frac{N}{2^a}\sum_{j=1}^{2^a-1} j^{A\lambda(N)}\equiv\frac{N}{2^a}\left(\sum_{\substack{1\leq j\leq 2^a-1\\ \textrm{$j$ even}}} j^{A\lambda(N)} + 2^{a-1}\right)\ \textrm{(mod $2^a$)}.$$
$$\sum_{j=1}^{N-1}j^{A\lambda(N)}\equiv\frac{N}{2^a}\left(\sum_{\substack{1\leq j\leq 2^a-1\\ \textrm{$j$ even}}} j^{B\phi(N)} + 2^{a-1}\right)\ \textrm{(mod $2^a$)}.$$
Now, if $a=1,2$ or $3$ it can be easily verified that:
$$\sum_{\substack{1\leq j\leq 2^a-1\\ \textrm{$j$ even}}} j^{A\lambda(N)}\equiv\sum_{\substack{1\leq j\leq 2^a-1\\ \textrm{$j$ even}}} j^{B\phi(N)}\ \textrm{(mod $2^a$)}.$$
On the other hand, if $a\geq 4$ we have that $\phi(N)\geq\lambda(N)\geq a$ and, consequently $j^{A\lambda(N)}\equiv j^{B\phi(N)}\equiv 0$ (mod $2^a$) for every $1\leq j\leq 2^{a-1}$ even. Thus:
$$\sum_{j=1}^{N-1} j^{A\lambda(N)}\equiv\sum_{j=1}^{N-1} j^{B\phi(N)}\ \textrm{(mod $2^a$)}$$
and the result follows.
\end{proof}

This lemma leads to some new characterizations of Giuga numbers. Recall that a composite integer $n$ is said to be a Giuga number (among other characterizations) if and only if ${\sum_{j=1}^{n-1} j^{\phi(n)}\equiv -1}$ (mod $n$).

\begin{prop}
Let $n$ be any composite integer. Then the following are equivalent:
\begin{itemize}
\item[i)] $n$ is a Giuga Number.
\item[ii)] For every positive integer $K$, $\displaystyle{\sum_{j=1}^{n-1}  j^{K\lambda(n)}\equiv \sum_{j=1}^{n-1}  j^{K\phi(n)}} \equiv -1$ (mod $n$).
\item [iii)] There exists a positive integer $K$ such that $\displaystyle{\sum_{j=1}^{n-1}  j^{K\lambda(n)}\equiv \sum_{j=1}^{n-1}  j^{K\phi(n)}} \equiv -1$ (mod $n$).
\end{itemize}
\end{prop}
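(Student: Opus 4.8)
The plan is to reduce everything to the single congruence $\sum_{j=1}^{n-1} j^{\phi(n)} \equiv -1 \pmod n$, which by Proposition 2 iii) is precisely the statement that $n$ is a Giuga number. The engine of the whole argument is the preceding Lemma, which I will use to show that all the power sums appearing in ii) and iii) are congruent to one another modulo $n$, independently of the multiplier $K$ and of whether we use $\lambda(n)$ or $\phi(n)$ in the exponent.

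Concretely, first I would record the chain of congruences
$$\sum_{j=1}^{n-1} j^{K\lambda(n)} \equiv \sum_{j=1}^{n-1} j^{K\phi(n)} \equiv \sum_{j=1}^{n-1} j^{\phi(n)} \pmod n.$$
The first congruence is the Lemma applied with $A = B = K$ and $N = n$. For the second congruence I would apply the Lemma twice: with $(A,B) = (1,K)$ to obtain $\sum j^{\lambda(n)} \equiv \sum j^{K\phi(n)}$, and with $(A,B) = (1,1)$ to obtain $\sum j^{\lambda(n)} \equiv \sum j^{\phi(n)}$; combining the two gives $\sum j^{K\phi(n)} \equiv \sum j^{\phi(n)}$. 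Thus, for every $K$, both sums in ii) and iii) coincide with $\sum j^{\phi(n)}$ modulo $n$.

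With this in hand the three equivalences become formal. The implication ii) $\Rightarrow$ iii) is immediate, since a statement holding for every $K$ holds for some $K$. For iii) $\Rightarrow$ i), I take the $K$ furnished by iii); the displayed chain then gives $\sum j^{\phi(n)} \equiv \sum j^{K\phi(n)} \equiv -1$, so $n$ is a Giuga number by Proposition 2 iii). For i) $\Rightarrow$ ii), if $n$ is a Giuga number then $\sum j^{\phi(n)} \equiv -1$, and the displayed chain propagates this value to $\sum j^{K\lambda(n)} \equiv \sum j^{K\phi(n)} \equiv -1$ simultaneously for every $K$.

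I do not expect a serious obstacle, since the real content has already been absorbed into the Lemma. The only point demanding a little care is the bookkeeping in the second congruence above: the Lemma is stated asymmetrically, with a $\lambda$-power-sum on the left and a $\phi$-power-sum on the right, so $\sum j^{K\phi(n)}$ cannot be compared to $\sum j^{\phi(n)}$ by a single application and one must route through a $\lambda$-power-sum as an intermediary. Once that chain is assembled correctly, the proposition follows at once.
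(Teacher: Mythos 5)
Your proposal is correct and is exactly the argument the paper intends: the paper states this proposition without a written proof, presenting it as an immediate consequence of the preceding Lemma together with the characterization of Giuga numbers via $\sum_{j=1}^{n-1} j^{\phi(n)}\equiv -1 \pmod n$. Your careful routing through a $\lambda$-power-sum to compare $\sum j^{K\phi(n)}$ with $\sum j^{\phi(n)}$ is precisely the right way to fill in the bookkeeping the paper leaves implicit.
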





The following result, which is a consequence of previous proposition, will allow us to generalize Giuga's ideas by considering the congruence ${\sum_{j=1}^{n-1}  j^{k(n-1)} \equiv  -1}$ (mod $n$) for each positive integer $k$.

\begin{cor}
If an integer $n$ is a strong Giuga number, then:
$$\sum_{j=1}^{n-1}  j^{k(n-1)} \equiv  -1\ \textrm{(mod $n$) for every positive integer $k$}.$$
\end{cor}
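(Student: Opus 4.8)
The plan is to reduce the corollary to the already-established Proposition 2 by exploiting the structural decomposition of strong Giuga numbers. By Proposition 1, a strong Giuga number $n$ is simultaneously a Giuga number and a Carmichael number, and I would use these two properties for complementary purposes: the Carmichael condition to control the exponent, and the Giuga condition to invoke the previous proposition.

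First I would recall the standard characterization of Carmichael numbers, namely that $\lambda(n) \mid n-1$ (equivalently, $a^{n-1} \equiv 1 \pmod n$ for every $a$ coprime to $n$). From this I can write $n-1 = m\lambda(n)$ for some positive integer $m$, so that for any positive integer $k$ the exponent factors as $k(n-1) = (km)\lambda(n)$. In other words, the Carmichael property guarantees that $k(n-1)$ is always an integer multiple of $\lambda(n)$.

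Next, since $n$ is also a Giuga number, I would apply Proposition 2 ii) with the specific choice $K = km$, which yields $\sum_{j=1}^{n-1} j^{K\lambda(n)} \equiv -1 \pmod n$. Substituting $K\lambda(n) = k(n-1)$ gives precisely $\sum_{j=1}^{n-1} j^{k(n-1)} \equiv -1 \pmod n$, and since $k$ was arbitrary the conclusion holds for every positive integer $k$.

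There is no serious obstacle here: the corollary is essentially a repackaging of Proposition 2, once one observes that the Carmichael property forces the exponent $k(n-1)$ into the form $K\lambda(n)$ to which that proposition applies. The only point requiring a moment's care is the justification that $\lambda(n) \mid n-1$ for Carmichael numbers, which is standard but is not proved explicitly in the excerpt; everything else is a direct substitution.
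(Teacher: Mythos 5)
Your argument is correct and is essentially the paper's own proof: both decompose the strong Giuga number as a Giuga number plus a Carmichael number, use $\lambda(n)\mid (n-1)$ to rewrite the exponent $k(n-1)$ as an integer multiple of $\lambda(n)$, and then invoke the characterization of Giuga numbers by $\sum_{j=1}^{n-1} j^{K\lambda(n)}\equiv -1 \pmod{n}$. The only slip is a label: the statement you need is Proposition 3 ii) (the $K\lambda(n)$ characterization), not Proposition 2 ii), which is the reciprocal-sum criterion $\sum_{p\mid n}\frac{1}{p}-\frac{1}{n}\in\mathbb{N}$.
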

\begin{proof}
If $n$ is a strong Giuga number, then it is both a Carmichael and a Giuga number. Being a Carmichael number, we have that $\lambda(n) \mid  (n-1)$ so if $\frac{k(n-1)}{\lambda(n)}=k'\in\mathbb{N}$ we get:
$$S:=\sum_{j=1}^{n-1}  j^{k(n-1)}=\sum_{j=1}^{n-1}  j^{k \lambda(n)\frac{(n-1)}{\lambda(n)}}=\sum_{j=1}^{n-1}  j^{k' \lambda(n)},$$
and, since $n$ is a Giuga number it is enough to apply Corollary 1 and Proposition 3 to get $S\equiv  -1$ (mod $n$).
\end{proof}

\section{$k$-Carmichael numbers}

A Carmichael number is a composite positive integer $n$ which satisfies the congruence $a^{n-1}\equiv 1$ (mod $n$) for every integer $a$ coprime to $n$. Korselt \cite{KOR} was the first to observe the basic properties of Carmichael numbers, the most important being the following characterization:

\begin{prop}[Korselt, 1899]
A composite number $n$ is a Carmichael number if and only if $n$ is square-free, and for each prime $p$ dividing $n$, $p-1$ divides $n-1$.
\end{prop}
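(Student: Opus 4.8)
The plan is to prove the two implications separately, with essentially all the content lying in the forward (``only if'') direction. For the easy direction, suppose $n=p_1\cdots p_s$ is square-free with the $p_i$ distinct and $p_i-1\mid n-1$ for each $i$. Given any $a$ coprime to $n$, Fermat's little theorem yields $a^{p_i-1}\equiv 1\pmod{p_i}$, and raising to the integer power $(n-1)/(p_i-1)$ gives $a^{n-1}\equiv 1\pmod{p_i}$ for every $i$. Since $n$ is the product of the distinct primes $p_i$, the Chinese Remainder Theorem upgrades these to $a^{n-1}\equiv 1\pmod n$, so $n$ is a Carmichael number. This part is routine.

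For the forward direction, assume $n$ is a Carmichael number. First I would note that $n$ must be odd: if $n$ were even, then $a=n-1\equiv -1\pmod n$ would give $a^{n-1}=(-1)^{n-1}=-1\not\equiv 1$, contradicting the Carmichael property (for $n>2$). Hence every prime divisor of $n$ is odd. To establish square-freeness I argue by contradiction: suppose $p^2\mid n$ for some odd prime $p$, and write $n=p^e m$ with $p\nmid m$ and $e\geq 2$. Since $p$ is odd, $(\mathbb{Z}/p^2\mathbb{Z})^{*}$ is cyclic of order $p(p-1)$; let $g$ be a primitive root modulo $p^2$. By the Chinese Remainder Theorem choose $a$ with $a\equiv g\pmod{p^2}$ and $a\equiv 1\pmod m$, so that $\gcd(a,n)=1$. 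The Carmichael condition forces $a^{n-1}\equiv 1\pmod{p^2}$, hence $p(p-1)\mid n-1$ and in particular $p\mid n-1$; but $p\mid n$ gives $p\nmid n-1$, a contradiction. Thus $n$ is square-free.

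It remains to deduce the divisibility condition. Fix a prime $p\mid n$ and write $n=pm$ with $\gcd(p,m)=1$, which is possible now that $n$ is known to be square-free. Let $g$ be a primitive root modulo $p$ and, again via the Chinese Remainder Theorem, pick $a$ with $a\equiv g\pmod p$ and $a\equiv 1\pmod m$, so that $a$ is coprime to $n$. The Carmichael property gives $a^{n-1}\equiv 1\pmod p$, that is $g^{n-1}\equiv 1\pmod p$; since $g$ has multiplicative order $p-1$, this forces $p-1\mid n-1$, as required.

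The main obstacle is the forward direction, and the decisive idea is to manufacture, for the prime power in question, a unit modulo $n$ whose local multiplicative order is as large as possible. This is exactly what combining a primitive root (which realizes the maximal order modulo $p$ or $p^2$) with the Chinese Remainder Theorem (which extends it to a unit modulo all of $n$ while staying $\equiv 1$ elsewhere) achieves; once such witnesses are in hand, both the failure of square-freeness and the divisibility constraint fall out immediately, and everything else reduces to Fermat's little theorem.
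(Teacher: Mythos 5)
Your proof is correct and complete: the easy direction via Fermat's little theorem and CRT, and the forward direction via primitive-root witnesses (modulo $p^2$ to rule out square factors, modulo $p$ to force $p-1\mid n-1$) is the standard textbook argument, and all the details (oddness of $n$, coprimality of the constructed $a$ to $n$) are handled properly. Note, however, that the paper offers no proof of this proposition at all --- it is quoted as Korselt's classical 1899 criterion with a citation to \cite{KOR} --- so there is no in-paper argument to compare yours against.
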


Nevertheless, Korselt did not find any example and it was Carmichael \cite{CAR2} who found the first and smallest of such numbers (561) and hence the name ``Carmichael number'' (which was introduced by Beeger \cite{BEE}). In the same paper Carmichael presents the following characterization in terms of a reduced totient function  $\lambda$.

\begin{prop}[Carmichael, 1912]
A composite number $n$ is a Carmichael number if and only if $\lambda(n)$ divides $(n-1)$.
\end{prop}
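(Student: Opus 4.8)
The plan is to derive both implications directly from the defining property of the Carmichael function $\lambda(n)$, namely that it is the least positive integer $m$ for which $a^m \equiv 1 \pmod n$ holds for every $a$ coprime to $n$ (equivalently, $\lambda(n)$ is the exponent of the unit group $(\mathbb{Z}/n\mathbb{Z})^{*}$). The composite hypothesis is carried along passively: it is part of the definition of a Carmichael number and plays no role in the arithmetic, so I would simply assume $n$ composite throughout.

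For the implication that $\lambda(n)\mid(n-1)$ forces $n$ to be a Carmichael number, I would take an arbitrary $a$ with $\gcd(a,n)=1$, write $n-1=\lambda(n)\,t$ for some $t\in\mathbb{N}$, and compute $a^{n-1}=\bigl(a^{\lambda(n)}\bigr)^{t}\equiv 1^{t}=1 \pmod n$, using the defining property of $\lambda$. Since $a$ ranges over all residues coprime to $n$, this is precisely the congruence defining a Carmichael number.

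For the converse, I would assume $a^{n-1}\equiv 1\pmod n$ for every $a$ coprime to $n$ and show that $n-1$ is a multiple of $\lambda(n)$ by the division algorithm. Writing $n-1=q\,\lambda(n)+r$ with $0\le r<\lambda(n)$, for each such $a$ one obtains $a^{r}\equiv a^{n-1}\bigl(a^{\lambda(n)}\bigr)^{-q}\equiv 1\pmod n$. Because this holds simultaneously for all $a$ coprime to $n$ and $r<\lambda(n)$, the minimality of $\lambda(n)$ forces $r=0$, that is, $\lambda(n)\mid(n-1)$.

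I do not expect a genuine obstacle: both directions are essentially immediate once $\lambda(n)$ is taken as the universal exponent modulo $n$. The only point deserving care is the converse, where one must invoke the \emph{minimality} of $\lambda(n)$ over all $a$ at once, rather than merely observing that each individual multiplicative order divides $n-1$. As an alternative to the division-algorithm step, one could instead appeal to the standard fact that in a finite abelian group the exponent is realized as the order of some element, pick an element $g$ of order exactly $\lambda(n)$ in $(\mathbb{Z}/n\mathbb{Z})^{*}$, and conclude from $g^{n-1}\equiv 1\pmod n$ that $\lambda(n)\mid(n-1)$.
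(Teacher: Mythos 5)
Your proof is correct. The paper states this proposition as a classical result of Carmichael (1912) and gives no proof of its own, so there is nothing to compare against; your argument is the standard one, reading $\lambda(n)$ as the exponent of $(\mathbb{Z}/n\mathbb{Z})^{*}$ and using its minimality (via the division algorithm, or via an element realizing the exponent) for the converse. Both directions are sound, and you are right that the compositeness hypothesis is purely definitional here.
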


Motivated by this characterization we have the following definition.

\begin{defi}
Given $k\in \mathbb{N}$ we will say that $n$ is a $k$-Carmichael number if $\lambda(n)$ divides $k(n-1)$.
\end{defi}

Square-free $k$-Carmichael numbers admit, at least, the following characterizations.

\begin{prop}
Let $n$ be a square-free positive composite integer and let $k\in \mathbb{N}$. The following are equivalent:
\begin{itemize}
\item[i)] $n$ is a $k$-Carmichael number.
\item[ii)] For every  prime divisor $p$ of $n$, $p-1$ divides $k(n-1)$.
\item[iii)] For every integer $a$, $a^{kn}\equiv a^k$ (mod $n$).
\end{itemize}
\end{prop}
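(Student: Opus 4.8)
The plan is to prove the equivalence by establishing a cycle of implications connecting the three conditions, relying on the square-free hypothesis to factor everything through the prime divisors of $n$. The natural cycle is i) $\Rightarrow$ ii) $\Rightarrow$ iii) $\Rightarrow$ i). For i) $\Rightarrow$ ii), I would write $n = p_1 \cdots p_s$ as a product of distinct primes; since $n$ is square-free, the Carmichael function satisfies $\lambda(n) = \operatorname{lcm}(p_1-1, \dots, p_s-1)$. From $\lambda(n) \mid k(n-1)$ it follows immediately that each $p_i - 1$, being a divisor of the least common multiple $\lambda(n)$, divides $k(n-1)$ as well. This direction is essentially a definitional unwinding.

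For ii) $\Rightarrow$ iii), the goal is to show $a^{kn} \equiv a^k \pmod{n}$ for every integer $a$. By the Chinese Remainder Theorem (which applies since $n$ is square-free, hence a product of distinct primes), it suffices to prove the congruence modulo each prime $p \mid n$ separately. Fix such a $p$. If $p \mid a$, both sides vanish modulo $p$. If $p \nmid a$, Fermat's little theorem gives $a^{p-1} \equiv 1 \pmod p$; since ii) provides $(p-1) \mid k(n-1)$, writing $k(n-1) = (p-1)t$ yields $a^{k(n-1)} \equiv 1 \pmod p$, so $a^{kn} = a^{k(n-1)} \cdot a^k \equiv a^k \pmod p$. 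Combining over all primes via CRT delivers iii).

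For iii) $\Rightarrow$ i), I would specialize $a$ to a suitable value and again reduce modulo each prime $p \mid n$. Choosing $a$ to be a primitive root modulo $p$ (which exists since $p$ is prime), the relation $a^{kn} \equiv a^k \pmod p$ forces $a^{k(n-1)} \equiv 1 \pmod p$, whence the multiplicative order of $a$, namely $p-1$, divides $k(n-1)$. Since this holds for every prime divisor $p$ of $n$, the least common multiple $\lambda(n) = \operatorname{lcm}_{p \mid n}(p-1)$ also divides $k(n-1)$, which is precisely the definition of $n$ being a $k$-Carmichael number. The key ingredient throughout is the square-free assumption, which simultaneously gives the clean formula for $\lambda(n)$ and licenses the reduction to prime moduli.

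I expect the main obstacle to be the care required in the implication iii) $\Rightarrow$ i), specifically guaranteeing the existence of an element whose order modulo each $p$ is exactly $p-1$ and confirming that iii), stated as a congruence modulo $n$, can legitimately be split into independent congruences modulo each prime factor. Both points rest squarely on square-freeness; were $n$ to contain a repeated prime factor, $\lambda(n)$ would involve prime-power contributions and the reduction to $\mathbb{Z}/p\mathbb{Z}$ would lose information, which is exactly why the proposition is restricted to the square-free case.
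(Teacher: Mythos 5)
Your proposal is correct and follows essentially the same route as the paper: the cycle i) $\Rightarrow$ ii) $\Rightarrow$ iii) $\Rightarrow$ i), using the formula $\lambda(n)=\operatorname{lcm}\{p-1 : p\mid n\}$ for square-free $n$, Fermat's little theorem together with the Chinese Remainder Theorem (splitting off the case $p\mid a$), and finally restricting to $a$ coprime to $n$ to recover i). The only cosmetic difference is that for iii) $\Rightarrow$ i) you invoke a primitive root modulo each prime $p$ separately, while the paper simply considers $a$ coprime to $n$ and uses that $\lambda(n)$ is the exponent of $(\mathbb{Z}/n\mathbb{Z})^{*}$; both are equally valid.
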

\begin{proof}
If $n$ is a square-free $k$-Carmichael number, then $\lambda(n)=\textrm{lcm}\{p-1\ |\ p\ \textrm{divides}\ n\}$ divides $k(n-1)$. This proves that i) implies ii).

Now, if $p-1$ divides $k(n-1)$ for every prime divisor $p$ of $n$ and given any integer $a$, it follows that $a^{k(n-1)}\equiv 1$ (mod $p$) for every prime divisor $p$ of $n$ such that $p$ does not divide $a$. If $p$ divides $a$ the same congruence follows trivially and this proves that ii) implies iii).

Finally, to see that iii) implies i) it is enough to consider an integer $a$ coprime to $n$.
\end{proof}

\begin{rem}
Observe that square-free $k$-Carmichael numbers are Fermat pseudoprimes to base $a^k$ for every $a$ such that $\gcd(a,n)=1$.
\end{rem}

\begin{rem}
If $n$ is a $k$-Carmichael number and $p^2 \mid n$, then we trivially have that $p$ divides $k$.
\end{rem}

We close this section with some conjectures regarding $k$-Carmichael numbers. We consider $\mathfrak{C}_k:=\{n :\textrm{ $n$ is a $k$-Carmichael number }\}$. It is clear that these sets satisfy that $k \mid s \Rightarrow \mathfrak{C}_s \subset \mathfrak{C}_s $. In particular the set of Carmichael numbers, $\mathfrak{C}_1$, is contained in any other $\mathfrak{C}_s$. Although it is not the point of this work, it could be interesting to study the relative asympotitc density of $\mathfrak{C}_k$ with respect to $\mathfrak{C}_s$ for every $s$ multiple of $k$. In particular the density of $\mathfrak{C}_1$ with respect to $\mathfrak{C}_k$  $$\delta_k:=\lim_{n\rightarrow\infty}\frac{|\mathfrak{C}_1 \bigcap [1,n]|}{|\mathfrak{C}_k \bigcap [1,n]|}.$$ 
We conjecture that it exists and that it is positive and strictly smaller than 1.

\begin{con} 
If $k \mid t$ then $\mathfrak{C}_k$ has a relative asymptotic density with respect to $\mathfrak{C}_t$. Moreover, for every $k,t \in \mathbb{N}$ $$\lim_{n\rightarrow\infty}\frac{|\mathfrak{C}_k \bigcap [1,n]|}{|\mathfrak{C}_t \bigcap [1,n]|}=\frac{\delta_t}{\delta_k}. $$
\end{con}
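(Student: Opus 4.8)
The plan is to split the statement into two logically distinct claims and to show that, once the constants $\delta_k$ are known to exist and to be positive, the displayed ``Moreover'' formula is purely formal. As a preliminary I would record the containment asserted just before the conjecture (where there is an evident typo): if $k\mid t$ then $k(n-1)\mid t(n-1)$, so $\lambda(n)\mid k(n-1)$ forces $\lambda(n)\mid t(n-1)$, giving $\mathfrak{C}_k\subseteq\mathfrak{C}_t$; in particular $\mathfrak{C}_1\subseteq\mathfrak{C}_k$ for every $k$. Writing $N_k(x):=|\mathfrak{C}_k\cap[1,x]|$, the quotient $N_1(x)/N_k(x)$ therefore always lies in $(0,1]$, and the real content of the definition of $\delta_k$ is that this bounded quotient actually converges to a positive limit. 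Granting that $\delta_k$ and $\delta_t$ exist and are strictly positive, for arbitrary $k,t\in\mathbb{N}$ one has
$$\frac{N_k(x)}{N_t(x)}=\frac{N_k(x)/N_1(x)}{N_t(x)/N_1(x)}\xrightarrow[x\to\infty]{}\frac{1/\delta_k}{1/\delta_t}=\frac{\delta_t}{\delta_k},$$
which is exactly the claimed value; taking $k\mid t$ then exhibits the relative density of $\mathfrak{C}_k$ in $\mathfrak{C}_t$. Hence the whole conjecture is equivalent to the single assertion that $\delta_k$ exists and satisfies $0<\delta_k\le 1$ for every $k$.

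The substantive step is to control the counting function $N_k(x)$, and here I would lean on the square-free characterization established above: a square-free $n$ belongs to $\mathfrak{C}_k$ exactly when $p-1\mid k(n-1)$ for each prime $p\mid n$, a controlled relaxation of Korselt's criterion $p-1\mid n-1$. The natural route is to adapt the machinery developed for ordinary Carmichael numbers --- the Alford--Granville--Pomerance construction, which assembles Carmichael numbers from primes $p$ whose $p-1$ divides a highly composite modulus, together with the Erd\H{o}s upper-bound sieve --- to this $k$-analogue, and then to compare $N_1(x)$ with $N_k(x)$ as directly as possible. One would also have to verify, using the second remark (if $p^2\mid n$ then $p\mid k$), that the non-square-free members of $\mathfrak{C}_k$ contribute only a lower-order term, so that $N_k(x)$ is asymptotically governed by its square-free part.

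The main obstacle is that the true order of magnitude of the Carmichael counting function $N_1(x)$ is itself unknown: only the Alford--Granville--Pomerance lower bound and the Erd\H{o}s upper bound are available, and these are far from matching, so no asymptotic formula for $N_1(x)$ is known. Since $\delta_k$ is by definition the limit of a quotient of two such counting functions, proving that the quotient even converges --- let alone that it tends to a computable positive constant strictly below $1$ --- appears to require an asymptotic understanding of $N_k(x)$ of a precision that is presently out of reach already for $k=1$. One can at least propose a heuristic value for $\delta_k$ by modelling $n$ as a random square-free integer and computing the relative likelihood that $\operatorname{lcm}_{p\mid n}(p-1)$ divides $k(n-1)$ rather than $(n-1)$, a correction that depends only on the prime factorisation of $k$; converting such a heuristic into a proof is exactly the difficulty, which is why the statement is offered as a conjecture rather than established as a proposition.
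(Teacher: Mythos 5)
The statement you were asked to prove is labelled a \emph{conjecture} in the paper, and the paper offers no proof of it whatsoever --- it is stated as an open problem, motivated only by computational evidence (the counting data in OEIS A231574/A231575). So there is no proof of the authors' to compare yours against, and your proposal, as you yourself concede in its final paragraph, is not a proof either: it is a correct formal reduction followed by an honest account of why the remaining step is out of reach.

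That said, the reduction you do carry out is sound and worth recording. The containment $\mathfrak{C}_k\subseteq\mathfrak{C}_t$ for $k\mid t$ is exactly what the paper intends by its (typo-ridden) line ``$k\mid s\Rightarrow\mathfrak{C}_s\subset\mathfrak{C}_s$'', and your observation that the entire conjecture --- both the existence of the relative density for $k\mid t$ and the ``Moreover'' identity for arbitrary $k,t$ --- is equivalent to the single assertion that each $\delta_k=\lim N_1(x)/N_k(x)$ exists and is positive is correct: once $\delta_k,\delta_t\in(0,1]$ exist, the quotient $N_k(x)/N_t(x)$ converges to $\delta_t/\delta_k$ by pure arithmetic of limits. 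The genuine gap is everything after that. No asymptotic formula, nor even the correct order of magnitude, is known for $N_1(x)$ (the Harman lower bound $X^{0.3322\ldots}$ quoted in Section 5 and the Erd\H{o}s-type upper bound are very far apart), so there is no known route to proving that the ratio of two such counting functions converges at all, let alone to a constant in $(0,1)$. Your suggestion of adapting the Alford--Granville--Pomerance construction could at best produce lower bounds for $N_k(x)$, not the asymptotic comparison the conjecture demands; the heuristic you sketch for the value of $\delta_k$ is plausible but is precisely the kind of statement that cannot currently be converted into a theorem. In short: your write-up correctly diagnoses the situation, but neither you nor the paper proves the statement, and it should remain labelled a conjecture.
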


If this conjecture was true, three interesting questions arise:
\begin{itemize}
\item[i)] Given integers $m$ and $n$ such that none of them divides the other, which is the case: $\delta_m < \delta_n$, $\delta_m > \delta_n$ or $ \delta_m = \delta_n $?
\item[ii)] Does it exist a prime $\mathfrak{p}$ such that $\delta_\mathfrak{p}<\delta_p$ for all prime $p \in \mathbb{\mathbb{N}}-\{\mathfrak{p}\}$? Soon we will conjeture its existence and that $\mathfrak{p}= 5$.
\item[iii)] which is the smallest composite integer $\mathfrak{c}$ such that $\delta_{\mathfrak{c}}> \delta_5$.
\end{itemize}

As we said we conjecture the following.

\begin{con} 
$0.18<\delta_5<\delta_p$ for all  prime $p \in \mathbb{\mathbb{N}}-\{5\}$.
\end{con}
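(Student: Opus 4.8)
Since the final statement is presented as a \emph{conjecture}, any honest plan must be heuristic and computational rather than a rigorous derivation: even the existence of the densities $\delta_k$ is open, since it presupposes asymptotic control of the counting function of $k$-Carmichael numbers, and sharp density results for ordinary Carmichael numbers (the case $k=1$) are themselves out of reach. The plan is therefore to (a) build a probabilistic model predicting each $\delta_\ell$ for prime $\ell$, (b) argue within that model that the minimum over primes is attained at $\ell=5$, and (c) corroborate $0.18<\delta_5$ by direct enumeration.

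For the model I would first use the square-free characterization established above (each prime divisor $p$ of $n$ satisfies $p-1\mid k(n-1)$), discarding non-square-free $k$-Carmichael numbers since $p^2\mid n$ forces $p\mid k$ and these are negligible. For a \emph{prime} $k=\ell$ the condition $p-1\mid \ell(n-1)$ differs from the Carmichael condition $p-1\mid(n-1)$ only in the prime $\ell$: for every $q\neq\ell$ one still needs $v_q(p-1)\le v_q(n-1)$, whereas at $q=\ell$ one power is gained, $v_\ell(p-1)\le v_\ell(n-1)+1$. Thus, writing $e=v_\ell(n-1)$ and $a_i=v_\ell(p_i-1)$, a configuration is $\ell$-Carmichael but not Carmichael exactly when $\max_i a_i=e+1$. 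In a model where the densities factor as Euler products over primes $q$, the factors for $q\neq\ell$ cancel in the ratio $\delta_\ell=\mathrm{density}(\mathfrak{C}_1)/\mathrm{density}(\mathfrak{C}_\ell)$, so that $\delta_\ell$ becomes a single local quantity at $\ell$. Modelling the $a_i$ as independent with $\Pr[a_i\ge t]=\ell^{-(t-1)}/(\ell-1)$ (the Dirichlet density of primes with $\ell^t\mid p-1$) and relating $e$ to the $a_i$ via $n-1\equiv\sum_i(p_i-1)\pmod{\ell^{e+1}}$, one expresses $\delta_\ell$ as the limiting probability that $\max_i a_i\le e$ given $\max_i a_i\le e+1$. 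Evaluating this as a function $f(\ell)$ of $\ell$ is the key analytic step.

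Given an explicit or numerically evaluable $f$, I would then show it is non-monotone: for large $\ell$ primes with $\ell\mid p-1$ are rare, the relaxation adds almost nothing, and $f(\ell)\to 1$; while for $\ell=2,3$ competing effects (every odd prime has $2\mid p-1$, and the even valuation of $n-1$) raise $f$ again, leaving $\ell=5$ as the minimizer. I would confirm this by tabulating $f(2),f(3),f(5),f(7),\dots$, checking that $f(5)$ is least, and pinning down $0.18<f(5)$ by truncating the relevant series with rigorous tail bounds. In parallel I would enumerate $\mathfrak{C}_1\cap[1,X]$ and $\mathfrak{C}_5\cap[1,X]$ for $X$ as large as feasible and verify that the empirical ratio stabilises above $0.18$, consistent with the model.

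The main obstacle is that none of this can currently be made unconditional. The ratio $|\mathfrak{C}_1\cap[1,X]|/|\mathfrak{C}_k\cap[1,X]|$ is not known to converge, the independence of the valuations $v_\ell(p-1)$ is only heuristic, and the correlation between $e=v_\ell(n-1)$ and the individual $a_i$ is most delicate precisely at the threshold $a_i=e+1$ that governs the conjecture. I therefore expect the realistic deliverable to be a \emph{conditional} statement — the model predicts both $\delta_5<\delta_p$ for all primes $p\neq5$ and $\delta_5>0.18$ — supported by computation, rather than an unconditional theorem.
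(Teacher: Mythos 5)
The statement you were asked about is labelled a \emph{conjecture} in the paper, and the paper contains no proof of it: the only support offered is the computation of the counting functions $\mathcal{C}_k(n)=|\mathfrak{C}_k\cap[1,n]|$ for $n\in\{10^6,10^7\}$ (OEIS A231574 and A231575), together with the empirical observation that $\mathcal{C}_5(n)>\mathcal{C}_3(n)>\mathcal{C}_7(n)>\mathcal{C}_p(n)$ for primes $p$ and $n>10^5$. Your reading of the situation is therefore exactly right: nothing here can currently be proved, not even the existence of the limits $\delta_k$, and an honest response can only combine heuristics with computation. Where you go genuinely beyond the paper is in the probabilistic model: isolating the condition at the prime $\ell$ (for square-free $n$ the relaxation from $p-1\mid n-1$ to $p-1\mid \ell(n-1)$ changes only the $\ell$-adic constraint, permitting $v_\ell(p-1)=v_\ell(n-1)+1$), arguing that the Euler factors at $q\neq\ell$ should cancel in the ratio defining $\delta_\ell$, and predicting the non-monotone behaviour of the resulting local quantity $f(\ell)$ --- tending to $1$ for large $\ell$, elevated at $\ell=2,3$, with a putative minimum at $\ell=5$. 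The paper does none of this; it merely reports data and even concedes that the ordering of the $\mathcal{C}_k(n)$ is not stable as $n$ grows. Your route would, if carried out, \emph{explain} the privileged role of $5$ rather than merely observe it, and would replace the bare bound $0.18$ by a model prediction; the unavoidable caveat, which you state clearly, is that everything remains conditional on the independence and Euler-product heuristics, so the deliverable is supporting evidence for the conjecture, not a proof --- which is also all the paper itself provides.
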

 
Of course all this could just be a computational mirage and the truth is just that ${\delta_k}=1$ for all $k\in \mathbb{N} $ but it seems worthwhile to find it out. The values of the counter function  $\mathcal{C}_k(n):= |\mathfrak{C}_k \bigcap [1,n]|$ for $ n \in \{10^6,10^7\}$ can be found in A231575 and A231574 from the OEIS. The relative order of $\mathcal{C}_k(n)$ for different values of $k$ changes when we increase $n$, nevertheless it seems that for every prime $p$ and $n>10^5$: 
$$\mathcal{C}_5(n)>\mathcal{C}_3(n)>\mathcal{C}_7(n)>\mathcal{C}_p(n)$$.

\section{$k$-strong Giuga numbers}

In this section we extend Giuga's ideas studying pairs of integers $k$ and $n$ such that ${\sum_{j=1}^{n-1}  j^{k(n-1)} \equiv  -1}$ (mod $n$). This motivates the following definition.

\begin{defi}
Given $k\in \mathbb{N}$ we say that a composite number $n$ is a $k$-strong Giuga number if
$$\sum_{j=1}^{n-1}  j^{k(n-1)} \equiv  -1 (\textrm{mod }n).$$
\end{defi}

We can also define the following sets:
$$\mathcal{G}_k:=\{ n\in \mathbb{N}\ |\ n \textrm{ is $k$-strong Giuga number} \},$$
$$\mathcal{K}_n:=\{k\in \mathbb{N}\ |\ n \textrm{ is $k$-strong Giuga number} \}.$$

\begin{rem}
With the previous notation, Giuga's conjecture is equivalent to the statement $\mathcal{G}_1=\emptyset$.
\end{rem}
A continuación tenemos el principal resultado de este trabajo con el que se caracteriza the k-strong Giuga numbers.
\begin{teor}
Let $n$ be a composite integer. Then $n$ is a $k$-strong Giuga number if and only if $n$ is both a Giuga number and a $k$-Carmichael number.
\end{teor}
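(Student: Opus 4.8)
The plan is to prove the two implications separately, writing $m := k(n-1)$ throughout and reducing the defining congruence modulo each prime divisor of $n$. The backward implication follows quickly from Proposition 3, essentially repeating the argument of the Corollary above with the Carmichael hypothesis replaced by the weaker $k$-Carmichael hypothesis; the forward implication is where the real work lies, and I would carry it out by a direct evaluation of the power sum $\sum_{j=1}^{n-1} j^m$ modulo $p$ for each prime $p\mid n$.

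For the backward implication, suppose $n$ is simultaneously a Giuga number and a $k$-Carmichael number. Then, by definition of $k$-Carmichael, $\lambda(n)\mid k(n-1)$, so I can write $k(n-1)=K\lambda(n)$ for some positive integer $K$ (the quotient is a positive integer because $k(n-1)$ is a positive multiple of $\lambda(n)$). Hence $\sum_{j=1}^{n-1} j^{k(n-1)}=\sum_{j=1}^{n-1} j^{K\lambda(n)}$, and since $n$ is a Giuga number, item ii) of Proposition 3 gives that this is $\equiv -1\pmod n$. Thus $n$ is a $k$-strong Giuga number.

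For the forward implication, assume $\sum_{j=1}^{n-1} j^m\equiv -1\pmod n$ and fix a prime $p\mid n$. Since $x\mapsto x^m\bmod p$ is periodic of period $p$ and $\{1,\dots,n\}$ consists of exactly $n/p$ complete residue systems modulo $p$ (while the top term satisfies $n^m\equiv 0$), I obtain $\sum_{j=1}^{n-1} j^m\equiv \frac{n}{p}\sum_{b=1}^{p-1} b^m\pmod p$, and by hypothesis this is $\equiv -1\pmod p$. First I would rule out square factors: if $p^2\mid n$ then $p\mid \frac{n}{p}$, forcing the left-hand side to be $0\equiv -1\pmod p$, a contradiction; hence $n$ is square-free and $\frac{n}{p}$ is a unit modulo $p$. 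Next I would invoke the classical evaluation that $\sum_{b=1}^{p-1} b^m\equiv -1\pmod p$ when $(p-1)\mid m$ and $\equiv 0\pmod p$ otherwise (immediate from Fermat's little theorem together with a primitive-root geometric sum in $\mathbb{Z}/p$). If $(p-1)\nmid m$ the congruence would again read $0\equiv -1$, which is impossible; so $(p-1)\mid m$ for every prime $p\mid n$, and since $n$ is square-free, $\lambda(n)=\operatorname{lcm}\{p-1 : p\mid n\}$ divides $m=k(n-1)$, i.e.\ $n$ is a $k$-Carmichael number. Feeding $(p-1)\mid m$ back into the congruence leaves $-\frac{n}{p}\equiv -1\pmod p$, that is $\frac{n}{p}\equiv 1\pmod p$, so $p\mid (n/p-1)$ for every prime $p\mid n$, which is exactly the definition of a Giuga number.

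The only genuinely delicate points are bookkeeping ones: justifying the reduction $\sum_{j=1}^{n-1} j^m\equiv \frac{n}{p}\sum_{b=1}^{p-1} b^m\pmod p$ and carefully tracking the cofactor $n/p$. I expect the main conceptual step to be the observation that reducing modulo $p$ (rather than modulo the full prime power $p^{v_p(n)}$) already suffices: the cofactor $n/p$ vanishes modulo $p$ precisely when $p^2\mid n$, so square-freeness, the $k$-Carmichael property, and the Giuga property all drop out of the single family of congruences modulo the primes dividing $n$, with no need for the finer power-sum computations modulo prime powers.
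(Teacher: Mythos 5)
Your proof is correct, and while the backward implication coincides with the paper's (write $k(n-1)=K\lambda(n)$ and invoke the congruence characterization of Giuga numbers from Proposition 3), your forward implication takes a genuinely different route. The paper does not evaluate the power sum at all: it cites an external result (Theorem 2.3 of Wong's thesis) to extract square-freeness, the divisibility $p\mid(n/p-1)$, and $(p-1)\mid k(n/p-1)$ from the hypothesis, and then still needs the identity $n-1=\frac{n}{p}(p-1)+\left(\frac{n}{p}-1\right)$ to pass from $(p-1)\mid k(n/p-1)$ to $(p-1)\mid k(n-1)$ before concluding. You instead reprove everything from scratch by reducing modulo each prime $p\mid n$: the block decomposition $\sum_{j=1}^{n-1}j^{m}\equiv\frac{n}{p}\sum_{b=1}^{p-1}b^{m}\pmod{p}$, combined with the classical evaluation $\sum_{b=1}^{p-1}b^{m}\equiv-1$ or $0\pmod{p}$ according as $(p-1)\mid m$ or not, delivers square-freeness (else the left side vanishes mod $p$), the divisibility $(p-1)\mid k(n-1)$ directly without the intermediate quantity $k(n/p-1)$, and $\frac{n}{p}\equiv1\pmod{p}$, i.e.\ the Giuga condition, all from one family of congruences. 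What your route buys is self-containedness --- the theorem no longer rests on an unpublished M.Sc.\ thesis --- at the cost of writing out what is essentially Giuga's original computation for $k=1$; the paper's route is shorter on the page but opaque about where the three conclusions actually come from. Both arguments are sound.
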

\begin{proof}
Assume that $n$ is a Giuga number and a $k$-Carmichael number. Since $\lambda(n)$ divides $k(n-1)$ we have that:
$$\sum_{j=1}^{n-1}  j^{k(n-1)} = \sum_{j=1}^{n-1}  j^{k'\lambda(n)}\equiv -1$$
due to Corollary 1.

Conversely, assume that $n$ is a $k$-strong Giuga number; i.e., ${\sum_{j=1}^{n-1}j^{k(n-1)}\equiv -1}$ (mod $n$). As a consequence (see \cite[Theorem 2.3]{WONG}) we have that $p-1$ divides $k\left(n/p-1\right)$, that $p$ divides $n/p-1$ for every $p$, prime divisor of $n$ and, moreover, that $n$ is square-free. Since $n$ is square-free, $\lambda(n)=\textrm{lcm}\{p-1\ | \ \textrm{$p$ prime dividing $n$}\}$. Thus, $\lambda(n)$ divides $k(n-1)$ and $n$ is a $k$-Carmichael number. To get that $n$ is also a Giuga number, due to Proposition 1, it is enough to apply Proposition 2 with $B=1$ and $A=\frac{k(n-1)}{\lambda(n)}$.
\end{proof}

In the particular case of $ k = 1 $, we have the characterization  that was given in proposition 1 of the strong Giuga numbers.

Taking into account now that the condition $\lambda(n)$ divides $k(n-1)$ is equivalent to $\frac{\lambda(n)}{\gcd(\lambda(n),n-1)}$ divides $k$, given any positive integer $n$, Theorem 1 gives a complete description of the set $\mathcal{K}_n$ as stated in the following corollary.

\begin{cor}
Let $n$ be any composite positive integer. Then:
$$\mathcal{K}_n =\begin{cases}  \left\{t\cdot\frac{\lambda(n)}{gcd(\lambda(n),n-1)}\ | \ t\in \mathbb{N} \right\}, & \textrm{if $n$ is a Giuga number;}\\ \emptyset, & \textrm{otherwise.}\\ \end{cases}$$
\end{cor}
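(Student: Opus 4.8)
The plan is to reduce everything to Theorem~1 together with an elementary divisibility computation. First I would recall that, by the very definition of $\mathcal{K}_n$, an integer $k$ lies in $\mathcal{K}_n$ precisely when $n$ is a $k$-strong Giuga number. By Theorem~1 this happens if and only if $n$ is simultaneously a Giuga number and a $k$-Carmichael number, that is, if and only if $n$ is a Giuga number and $\lambda(n)\mid k(n-1)$. The Giuga condition does not involve $k$, so it immediately splits the analysis into two cases.

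In the case where $n$ is \emph{not} a Giuga number, the first requirement of Theorem~1 fails for every $k$; hence no $k$ can make $n$ a $k$-strong Giuga number, and $\mathcal{K}_n=\emptyset$, which matches the lower branch of the claimed formula.

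In the remaining case where $n$ \emph{is} a Giuga number, the Giuga condition holds automatically and we are left with $\mathcal{K}_n=\{k\in\mathbb{N}:\lambda(n)\mid k(n-1)\}$. To identify this set, write $d:=\gcd(\lambda(n),n-1)$. Then $\lambda(n)\mid k(n-1)$ is equivalent to $\frac{\lambda(n)}{d}\mid k\cdot\frac{n-1}{d}$, and since $\gcd\!\left(\lambda(n)/d,(n-1)/d\right)=1$ by construction, this is in turn equivalent to $\frac{\lambda(n)}{d}\mid k$. Thus $k$ ranges exactly over the positive multiples of $\frac{\lambda(n)}{\gcd(\lambda(n),n-1)}$, which is precisely the upper branch.

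The argument is essentially mechanical once Theorem~1 is available; the only point requiring any care is the coprimality step $\gcd\!\left(\lambda(n)/d,(n-1)/d\right)=1$, since it is exactly this fact that allows us to cancel the common factor and conclude that the solution set is a full arithmetic progression of multiples rather than something larger. I do not anticipate any genuine obstacle here, as the equivalence between $\lambda(n)\mid k(n-1)$ and $\frac{\lambda(n)}{\gcd(\lambda(n),n-1)}\mid k$ has already been recorded in the remark immediately preceding the statement.
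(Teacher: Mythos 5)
Your proposal is correct and follows essentially the same route as the paper: the paper likewise obtains the corollary by combining Theorem~1 with the observation that $\lambda(n)\mid k(n-1)$ is equivalent to $\frac{\lambda(n)}{\gcd(\lambda(n),n-1)}\mid k$. You merely spell out the coprimality cancellation that the paper leaves implicit, which is a welcome but not substantively different addition.
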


\begin{rem}
Note that $\mathcal{K}_n=\mathbb{N}$ if and only if $n$ is a prime or a strong Giuga number.
\end{rem}

Since it is easy to see that $n\in\mathcal{G}_k$ if and only if $k\in\mathcal{K}_n$ we also have the following result.

\begin{cor}
$\mathcal{G}_k$ is nonempty if and only if  $\lambda(n)$ divides $k(n-1)$ for some Giuga number $n$.
\end{cor}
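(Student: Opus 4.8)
The plan is to prove this by simply unwinding the definitions and invoking Theorem~1, since the statement is essentially a reformulation of that theorem at the level of the family of sets $\mathcal{G}_k$. First I would observe that, directly from the definition of $\mathcal{G}_k$, the set is nonempty precisely when there exists a composite integer $n$ that is a $k$-strong Giuga number. The whole content then comes from translating ``$k$-strong Giuga number'' into an intrinsic arithmetic condition on $n$.

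The key step is to apply Theorem~1: a composite $n$ is a $k$-strong Giuga number if and only if $n$ is both a Giuga number and a $k$-Carmichael number. Recalling Definition~4, the $k$-Carmichael condition on $n$ is exactly the divisibility $\lambda(n) \mid k(n-1)$. Chaining these two equivalences, the existence of an element of $\mathcal{G}_k$ is equivalent to the existence of a composite $n$ that is a Giuga number and satisfies $\lambda(n) \mid k(n-1)$, which is precisely the asserted characterization. I would also note that this can equivalently be phrased through the identity $n \in \mathcal{G}_k \iff k \in \mathcal{K}_n$ recorded just before the statement, together with the description of $\mathcal{K}_n$ in Corollary~2, but the direct route through Theorem~1 is the shortest.

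Since each step is a verbatim translation between formulations already established, there is no real obstacle here. The only point requiring a moment's care is the handling of the existential quantifier: for a given member $n$ of $\mathcal{G}_k$, the Giuga number witnessing the right-hand condition is that same $n$, so the two existence statements coincide literally rather than merely being logically linked. Consequently the proof reduces to a single application of Theorem~1 and Definition~4, and I would write it in one short paragraph.
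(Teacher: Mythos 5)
Your proof is correct and takes essentially the same route as the paper: the paper deduces the corollary from the equivalence $n\in\mathcal{G}_k \iff k\in\mathcal{K}_n$ together with Corollary 2, which is itself just Theorem 1 restated, so your direct appeal to Theorem 1 and the definition of $k$-Carmichael numbers is the same argument in slightly different packaging.
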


Using this last result we can find values of $k$ such that $\mathcal{G}_k$ is nonempty. To do so, we evaluate $k(n):=\frac{\lambda(n)}{\gcd(\lambda(n),n-1)}$ for every known Giuga number. Thus, we will have thirteen values of $k$ (recall the introduction) for which $\mathcal{G}_{tk}$ is known to be nonempty for any $t$:
\begin{align*}
k({\mathfrak{g}_1})= & 4;\\
k({\mathfrak{g}_2})= & 60;\\
k({\mathfrak{g}_3})= & 120;\\
k({\mathfrak{g}_4})= &2320;\\
k({\mathfrak{g}_5})=&1552848;\\
k({\mathfrak{g}_6})=&10080;\\
k({\mathfrak{g}_7})=&139714902540;\\
k({\mathfrak{g}_8})=&93294624780;\\
k({\mathfrak{g}_9})=&228657996794220;\\
k({\mathfrak{g}_{10}})=&4756736241732916394976;\\
k({\mathfrak{g}_{11}})=&20024071474861042488900;\\
k({\mathfrak{g}_{12}})=&2176937111336664570375832140;\\
k({\mathfrak{g}_{13}})=&15366743578393906356665002406454800354974137359272\\&445859047945613961394951904884493965220.
\end{align*}

For these values and $t=1$ one gets:

\begin{align*}
\mathcal{G}_{k({\mathfrak{g}_1})}&=\{\mathfrak{g}_1,\dots\};\\
\mathcal{G}_{k({\mathfrak{g}_2})}&=\{\mathfrak{g}_1, \mathfrak{g}_2,\dots\};\\
\mathcal{G}_{k({\mathfrak{g}_3})}&=\{\mathfrak{g}_1, \mathfrak{g}_2, \mathfrak{g}_3,\dots\};\\
\mathcal{G}_{k({\mathfrak{g}_4})}&=\{\mathfrak{g}_1, \mathfrak{g}_4,\dots\};\\
\mathcal{G}_{k({\mathfrak{g}_5})}&=\{\mathfrak{g}_1,\mathfrak{g}_5,\dots\};\\
\mathcal{G}_{k({\mathfrak{g}_6})}&=\{\mathfrak{g}_1,\mathfrak{g}_2, \mathfrak{g}_6,\dots\};\\
\mathcal{G}_{k({\mathfrak{g}_7})}&=\{\mathfrak{g}_1,\mathfrak{g}_7,\dots\};\\
\mathcal{G}_{k({\mathfrak{g}_8})}&=\{\mathfrak{g}_1,\mathfrak{g}_2,\mathfrak{g}_8,\dots\};\\
\mathcal{G}_{k({\mathfrak{g}_9})}&=\{\mathfrak{g}_1,\mathfrak{g}_2,\mathfrak{g}_9,\dots\};\\
\mathcal{G}_{k({\mathfrak{g}_{10}})}&=\{\mathfrak{g}_1,\mathfrak{g}_{10},\dots\};\\
\mathcal{G}_{k({\mathfrak{g}_{11}})}&=\{\mathfrak{g}_1,\mathfrak{g}_{11},\dots\};\\
\mathcal{G}_{k({\mathfrak{g}_{12}})}&=\{\mathfrak{g}_1,\mathfrak{g}_2,\mathfrak{g}_{12},\dots\};\\
\mathcal{G}_{k({\mathfrak{g}_{13}})}&=\{\mathfrak{g}_1,\mathfrak{g}_2,\mathfrak{g}_{13},\dots\}.
\end{align*}

\section{Reflecting about Giuga's conjecture }

In recent work by W. D. Banks, C. W. Nevans, and C. Pomerance \cite{POM}, the following bounds were given:

\begin{teor} For any fixed $ \varepsilon> 0$, $\beta=0.3322408$ and all sufficiently large X, we have
$$\mid  \{n<X :  n \in \mathfrak{C}_1 \} \mid  \geq X^{\beta-\varepsilon} \textrm{ (G. Harman \cite{HAR})}$$
$$\mid  \{n<X : n \in \mathfrak{C}_1 \backslash \mathcal{G}_1 \} \mid  \geq X^{\beta-\varepsilon} $$
\end{teor}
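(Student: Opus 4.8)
The plan is to establish the two inequalities by entirely different means. The first is nothing but Harman's theorem \cite{HAR}, quoted verbatim, so there is nothing to do there. For the second I would argue that the family of Carmichael numbers that Harman constructs to obtain his lower bound consists, for $X$ large, exclusively of numbers that are \emph{not} Giuga numbers. Since by Proposition 1 (with $k=1$) we have $\mathcal{G}_1 = \mathfrak{C}_1 \cap \{\text{Giuga numbers}\}$, any Carmichael number that fails to be Giuga already lies in $\mathfrak{C}_1 \setminus \mathcal{G}_1$, and the second inequality then simply inherits Harman's count.

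The obstruction that rules out the Giuga property is the reciprocal-sum characterization of Proposition 2(ii): every Giuga number $n$ satisfies $\sum_{p\mid n}\frac1p - \frac1n \in \mathbb{N}$. As this quantity is strictly positive (because $\frac1n < \frac{1}{p^-(n)} \le \sum_{p\mid n}\frac1p$ for squarefree composite $n$, where $p^-(n)$ denotes the least prime factor), it is a positive integer, whence $\sum_{p\mid n}\frac1p > 1$. Writing $\omega(n)$ for the number of distinct prime factors, each term obeys $\frac1p \le \frac{1}{p^-(n)}$, so
$$1 < \sum_{p\mid n}\frac1p \le \frac{\omega(n)}{p^-(n)}.$$
Thus a Giuga number must satisfy $p^-(n) < \omega(n)$, and contrapositively any squarefree $n$ with $p^-(n) \ge \omega(n)$ cannot be a Giuga number.

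It remains to match this with Harman's construction. Here I would extract from \cite{HAR} (as opposed to its mere counting conclusion) the structural feature shared by the Alford--Granville--Pomerance type constructions: the Carmichael numbers produced below $X$ are squarefree products of $k$ primes, all lying above a threshold $y = y(X) \to \infty$, while the number $k$ of prime factors grows only like a power of $\log X$, so that $k = o(y)$. Consequently, for all large $X$ each constructed $n \le X$ has $p^-(n) \ge y > k = \omega(n)$, so $n$ is not Giuga and hence $n \in \mathfrak{C}_1 \setminus \mathcal{G}_1$. Counting these numbers reproduces the bound $X^{\beta-\varepsilon}$.

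I expect the genuine difficulty to be exactly this last step, and it is worth explaining why a softer argument does not suffice. One cannot derive the second inequality by subtracting the strong Giuga numbers from Harman's total: the best available upper bound for them, the Luca--Pomerance--Shparlinski estimate $\ll X^{1/2}/(\log X)^2$ of \cite{LUCA}, exceeds $X^{\beta}$ because $\beta < \tfrac12$, so a naive difference is vacuous. The proof must therefore exploit the internal shape of Harman's primes rather than any known density of $\mathcal{G}_1$, and the reciprocal-sum test above is precisely what converts that shape into the non-Giuga conclusion. Finally, note that under Giuga's conjecture $\mathcal{G}_1 = \emptyset$ and the second inequality degenerates into the first, so the whole content of the statement lies in the scenario that is widely believed to be empty.
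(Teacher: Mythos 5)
The first thing to note is that the paper does not prove this theorem at all: it is imported verbatim from Banks, Nevans and Pomerance \cite{POM}, with the first line due to Harman \cite{HAR}, so there is no internal proof to compare yours against. Judged on its own terms, your reconstruction follows the same route as the cited source. The reduction is correct: by Proposition 1, $\mathcal{G}_1$ consists exactly of the Carmichael numbers that are also Giuga numbers, so $\mathfrak{C}_1\setminus\mathcal{G}_1$ is precisely the set of non-Giuga Carmichael numbers, and it suffices to show that the numbers in Harman's family are not Giuga. Your deduction that a Giuga number satisfies $\sum_{p\mid n}1/p>1$, and hence has least prime factor smaller than $\omega(n)$, is sound (the quantity in Proposition 2(ii) is strictly positive, hence at least $1$). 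Your observation that a naive subtraction fails --- because the Luca--Pomerance--Shparlinski bound $X^{1/2}/(\log X)^2$ from \cite{LUCA} dwarfs $X^{\beta-\varepsilon}$ for $\beta<\tfrac12$ --- is exactly the right sanity check and explains why the argument must go through the structure of the construction.

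The one genuine gap is the structural claim you ``extract'' from \cite{HAR}: that every Carmichael number $n\le X$ in Harman's family has all prime factors above a threshold $y$ with $\omega(n)<y$. As written this is an assertion, not a proof. In the Alford--Granville--Pomerance framework the prime pool is of the shape $\{p\le y:\ p-1\mid L\}$, which a priori contains small primes, so one must either restrict the pool to primes in an interval such as $(y^{1-\delta},y]$ and check that the lower bound $X^{\beta-\varepsilon}$ survives, or discard the subsets containing a small prime; only then does the estimate $\sum_{p\mid n}1/p\le\omega(n)/p^{-}(n)<1$ close the argument. This bookkeeping is the actual content of \cite{POM}, and a self-contained proof would have to carry it out rather than gesture at it. Since the theorem is itself a quotation in this paper, your sketch is an acceptable account of where the difficulty lies, but it is not yet a proof independent of \cite{POM}.
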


The authors of the aforementioned paper consider the above bounds to be ``consistent''  with Giuga's conjecture. We believe, however, that the similar consideration could be made with respect to conjectures that are actually false, such as  $\mathcal{G}_4=\emptyset$ or $\mathcal{G}_{1552848}=\emptyset$. 
The authors of the present paper, in view of the generalization presented here, are convinced that Giuga's conjecture is not based on any sound logical-mathematical consideration and that its strength rests only on the extreme rarity of Giuga numbers, combined with the 
null asymptotic density of Carmichael numbers. In fact, if we may be forgiven the joke, we might conjecture -without any fear of our conjecture being refuted in many years- that $\mathcal{G}_2=\mathcal{G}_3=\mathcal{G}_5=\emptyset$ or, to be even more daring, that  $\mathcal{G}_p=\emptyset$ for all prime $p$. Of course, Giuga's conjecture has the honour of being the strongest of all of these conjectures. In fact, in virtue of Corollary 1, should it be refuted, all the others would fall with it.

\end{document}